\def\RSsubtxt{section~}\newref{sub}{name = \RSsubtxt}}
\def\RSthmtxt{theorem~}\newref{thm}{name = \RSthmtxt}}
\def\RSlemtxt{lemma~}\newref{lem}{name = \RSlemtxt}}
\numberwithin{equation}{section}
\numberwithin{figure}{section}
\theoremstyle{plain}
\newtheorem{thm}{\protect\theoremname}
  \theoremstyle{plain}
  \newtheorem{prop}[thm]{\protect\propositionname}
  \theoremstyle{plain}
  \newtheorem{cor}[thm]{\protect\corollaryname}
  \theoremstyle{definition}
  \newtheorem{defn}[thm]{\protect\definitionname}
  \theoremstyle{remark}
  \newtheorem{claim}[thm]{\protect\claimname}
\date{}
  \providecommand{\claimname}{Claim}
  \providecommand{\corollaryname}{Corollary}
  \providecommand{\definitionname}{Definition}
  \providecommand{\propositionname}{Proposition}
\providecommand{\theoremname}{Theorem}
\begin{document}

\title{On maximizing the speed of a random walk in fixed environments}

\author{Amichai Lampert \and Assaf Shapira}
\begin{abstract}
We consider a random walk in a fixed $\mathbb{Z}$ environment composed
of two point types: $\left(q,1-q\right)$ and $\left(p,1-p\right)$
for $\frac{1}{2}<q<p$. We study the expected hitting time at $N$
for a given number $k$ of $p$-drifts in the interval $[1,N-1]$,
and find that this time is minimized asymptotically by equally spaced
$p$-drifts.
\end{abstract}
\maketitle

\section{Introduction}

Procaccia and Rosenthal \cite{procaccia2012need} studied how to optimally
place given number of vertices with a positive drift on top of a simple
random walk to minimize the expected crossing time of an interval.
They ask about extending their work to the situation where the environment
on $\mathbb{Z}$ is composed of two point types: $\left(q,1-q\right)$
and $\left(p,1-p\right)$ for $\frac{1}{2}<q<p$. This is the goal
of this note. See \cite{procaccia2012need} for background and further
related work.

Consider nearest neighbor random walks on ${0,1,...,N}$ with reflection
at the origin. We denote the random walk by $\left\{ X_{n}\right\} _{n=0}^{\infty}$
, and by $\omega\left(i\right)$ the transition probability at vertex
$i$:

\begin{eqnarray*}
P(X_{n+1}=i+1|X_{n}=i) & = & \omega\left(i\right)\\
P(X_{n+1}=i-1|X_{n}=i) & = & 1-\omega\left(i\right).
\end{eqnarray*}

First, we prove the following proposition concerning the expected
hitting time at vertex $N$:
\begin{prop}
\label{prop:hitting_time}For a walk $\omega$ starting at $x$, the
hitting time $T_{N}=\min\left\{ n\ge0|X_{n}=N\right\} $ satisfies:

\[
E_{\omega}^{x}\left(T_{N}\right)=N-x+2\sum_{i=x}^{N-1}\sum_{j=1}^{i}\prod_{k=j}^{i}\rho_{k},
\]

where $\rho_{i}=\frac{1-\omega\left(i\right)}{\omega\left(i\right)}$,
and $E_{\omega}^{x}\left(T_{N}\right)$ stands for the expected hitting
time. In particular:

\[
E_{\omega}^{0}\left(T_{N}\right)=N+2\sum_{i=1}^{N-1}\sum_{j=1}^{i}\prod_{k=j}^{i}\rho_{k}.
\]
\end{prop}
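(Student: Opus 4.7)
The plan is to decompose $T_N$ using the strong Markov property into a sum of one-step crossing times and then solve a first-order linear recursion for each of those.

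More precisely, I would first set $h(i) = E_\omega^i(T_{i+1})$ for the expected time to pass from $i$ to $i+1$. Since the walk must pass through $x, x+1, \dots, N-1$ in order before reaching $N$, the strong Markov property at each first-passage time gives
\[
E_\omega^x(T_N) = \sum_{i=x}^{N-1} h(i),
\]
so it suffices to prove $h(i) = 1 + 2\sum_{j=1}^{i} \prod_{k=j}^{i}\rho_k$.

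For this, I would use first-step analysis. Reflection at the origin means $\omega(0)=1$, so $h(0)=1$. For $i\ge 1$, conditioning on the first step yields
\[
h(i) = \omega(i)\cdot 1 + (1-\omega(i))\bigl(1 + h(i-1) + h(i)\bigr),
\]
since after a step to the left the walk must first climb back to $i$ (taking expected time $h(i-1)$ by the Markov property) and then cross from $i$ to $i+1$ (an independent copy of $h(i)$). Rearranging gives the recursion
\[
h(i) = \frac{1}{\omega(i)} + \rho_i\, h(i-1).
\]
Using $1/\omega(i) = 1 + \rho_i$, a straightforward induction on $i$ yields the claimed closed form $h(i)=1+2\sum_{j=1}^{i}\prod_{k=j}^{i}\rho_k$. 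Substituting back into the sum produces the stated formula, and the $x=0$ case is immediate.

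The only subtle point, and really the main thing to be careful with, is the justification for using $h(i)$ as the independent copy after a backtrack: one must invoke the strong Markov property at the first return time to $i$ and verify that the expectations in play are finite, which follows from $\rho_i<1$ (since $\omega(i)>\tfrac{1}{2}$) so that all the series involved converge.
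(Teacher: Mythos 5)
Your proposal is correct, and it reaches the paper's formula by a route that differs in presentation rather than in substance. The paper works directly with $v_x=E_\omega^x(T_N)$, writes the first-step equations $v_x=\omega(x)v_{x+1}+(1-\omega(x))v_{x-1}+1$ with $v_N=0$, $v_0=v_1+1$, and solves them algebraically by passing to the differences $a_x=v_x-v_{x-1}$, which obey $a_{x+1}=\rho_x a_x-\rho_x-1$. Your crossing times $h(i)=E_\omega^i(T_{i+1})$ are exactly $-a_{i+1}=v_i-v_{i+1}$, and your recursion $h(i)=\tfrac{1}{\omega(i)}+\rho_i h(i-1)$ is the same recursion in disguise; the difference is that you derive it probabilistically (strong Markov at the first return to $i$, then summing crossing times), while the paper derives it by manipulating the linear system. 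Your version has the advantage of giving each term $1+2\sum_{j\le i}\prod_{k=j}^{i}\rho_k$ a direct meaning as the expected time to cross the edge $(i,i+1)$, and it makes explicit the finiteness issue that the paper leaves implicit. One small correction on that last point: finiteness of $h(i)$ has nothing to do with $\rho_i<1$ or convergence of series (all sums here are finite); the proposition is stated for a general environment $\omega$, and the right justification is simply that on the finite interval $\{0,\dots,N\}$ with $\omega(i)>0$ for all $i$ the walk hits $N$ with probability one and all expected hitting times are finite, so the first-step and strong-Markov identities are equations between real numbers. With that justification in place of the $\omega(i)>\tfrac12$ remark, your argument proves the proposition in the same generality as the paper.
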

\begin{cor}
The expected hitting time from $0$ to $N$ is symmetric under reflection
of the environment, i.e. taking the environment $\omega^{\prime}\left(i\right)=\omega\left(N-i\right)$
gives $E_{\omega^{\prime}}^{0}\left(T_{N}\right)=E_{\omega}^{0}\left(T_{N}\right)$.
\end{cor}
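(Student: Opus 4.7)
The plan is to apply the explicit formula from \propref{hitting_time} to both environments and reduce the corollary to a symbolic identity about the double sum
\[
S(\omega) := \sum_{i=1}^{N-1}\sum_{j=1}^{i} \prod_{k=j}^{i}\rho_{k}.
\]
Since $\omega'$ yields another reflecting walk of the same type on $\{0,\dots,N\}$, the proposition gives $E_{\omega'}^{0}(T_{N})=N+2S(\omega')$, so it suffices to prove $S(\omega')=S(\omega)$.

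First I would record the elementary identity $\rho'_{k}=\rho_{N-k}$, which follows directly from $\omega'(k)=\omega(N-k)$. Substituting this into $S(\omega')$ gives a double sum whose inner factors are $\rho_{N-k}$ in place of $\rho_{k}$. The natural change of variables is $a=N-i$ and $b=N-j$, which rewrites the inner product as $\prod_{\ell=a}^{b}\rho_{\ell}$ (via $\ell=N-k$) and transforms the summation domain $\{(j,i):1\le j\le i\le N-1\}$ bijectively onto $\{(a,b):1\le a\le b\le N-1\}$. Relabeling $(a,b)$ as $(j,i)$ recovers $S(\omega)$.

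Conceptually, $S(\omega)$ sums the products $\prod_{k\in I}\rho_{k}$ over all subintervals $I\subseteq[1,N-1]$, and the reflection $i\mapsto N-i$ induces a bijection $I\mapsto\{N-k:k\in I\}$ on such intervals while sending each summand to itself. I do not anticipate any substantial obstacle beyond careful book-keeping of the endpoint swap and the associated reversal of the product index; the content of the corollary is entirely contained in the formula from \propref{hitting_time}.
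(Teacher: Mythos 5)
Your proof is correct and matches the paper's intent: the corollary is presented there as an immediate consequence of Proposition \ref{prop:hitting_time}, since $S_N$ is a sum of $\prod_{k\in I}\rho_k$ over all subintervals $I\subseteq[1,N-1]$ and the reflection $k\mapsto N-k$ permutes these subintervals (your change of variables $(j,i)\mapsto(N-i,N-j)$ makes this explicit). No gaps; the book-keeping of the endpoint swap is exactly as you describe.
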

Next we turn to the case of an environment consisting of two types
of drifts, $\left(q,1-q\right)$ (i.e. probability $q$ to go to the
right and $1-q$ to the left) and $\left(p,1-p\right)$, for some
$\frac{1}{2}<q<p\le1$. Assume that $k$ of the vertices are $p$-drifts,
and the rest are $q$-drifts. In \cite{procaccia2012need} it was
proven that for $q=\frac{1}{2}$ equally spaced $p$-drifts minimize
$\frac{E_{\omega}^{0}\left(T_{N}\right)}{N}$ (for large $N$). In
this paper we extend this result for $q>\frac{1}{2}$. We define an
environment in which the $p$-drifts are equally spaced (up to integer
effects):

\[
\omega_{N,k}\left(x\right)=\begin{cases}
p & x=\left\lfloor i\cdot\frac{N-1}{k}\right\rfloor \,\text{for some}\,\,1\le i\le k\\
q & \text{otherwise}
\end{cases},
\]

and prove the following theorem:
\begin{thm}
\label{thm:kn}For every $\varepsilon>0$ there exists $n_{0}$ such
that for every $N>n_{0}$ and environment $\omega$:

\[
\frac{E_{\omega}^{0}\left(T_{N}\right)}{N}>\frac{E_{\omega_{N,k}}^{0}\left(T_{N}\right)}{N}-\varepsilon,
\]

where $k$ is the number of $p$-drifts in $\omega$.
\end{thm}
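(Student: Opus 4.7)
The plan is to reduce, via Proposition~\ref{prop:hitting_time}, to comparing the sum $\Sigma(\omega) := \sum_{i=1}^{N-1}S_\omega(i)$, where $S_\omega$ obeys the recursion $S_\omega(i)=\rho_i(1+S_\omega(i-1))$ with $S_\omega(0)=0$. I label the $p$-drifts of $\omega$ at positions $m_1<\cdots<m_k$, set $m_0:=0$ and $m_{k+1}:=N$, and define the gap sizes $d_l:=m_l-m_{l-1}$, so that $\sum_{l=1}^{k+1}d_l=N$. Each block $\{m_{l-1}+1,\ldots,m_l\}$ consists of $d_l-1$ consecutive $q$-drifts followed (for $l\le k$) by a $p$-drift.

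Applying the recursion within a single block yields the affine updates
\[
S_\omega(m_l) = a(d_l)\,S_\omega(m_{l-1}) + b(d_l),\qquad a(d)=\rho_p\rho_q^{d-1},\ \ b(d)=\rho_p\,\frac{1-\rho_q^d}{1-\rho_q},
\]
and the block's contribution to $\Sigma$ is likewise affine in $S_\omega(m_{l-1})$, of the form $A(d_l)+B(d_l)\,S_\omega(m_{l-1})$ for explicit coefficients $A,B$. Since $a(d)\le \rho_p<1$ uniformly in $d$, this block-to-block recursion is a strict contraction with fixed point $S^*(d):=b(d)/(1-a(d))$; defining the steady-state per-block contribution $\Phi(d):=A(d)+B(d)\,S^*(d)$ gives the decomposition
\[
\Sigma(\omega) = \sum_{l=1}^{k+1}\Phi(d_l) + R(\omega),\qquad R(\omega) = \sum_{l=1}^{k+1}B(d_l)\bigl(S_\omega(m_{l-1})-S^*(d_l)\bigr).
\]

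The analytic heart of the argument is then to verify the convexity claim $\Phi(d+1)+\Phi(d-1)\ge 2\Phi(d)$ for all $d\ge 2$, by direct algebraic manipulation of the closed-form expression for $\Phi$. Granting this, Jensen's inequality yields $\sum_{l=1}^{k+1}\Phi(d_l)\ge (k+1)\,\Phi(N/(k+1))$ for any admissible gap partition, and the equally spaced environment $\omega_{N,k}$ (whose gaps differ from $N/k$ by at most $1$) attains this lower bound up to an $O(1)$ error. Combined with a uniform bound $|R(\omega)|=o(N)$, this gives $\Sigma(\omega)\ge \Sigma(\omega_{N,k}) - \tfrac{\varepsilon}{2}N$ for $N$ large, which via Proposition~\ref{prop:hitting_time} converts to the desired inequality on $E^0_\omega(T_N)/N$.

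The main obstacle is the convexity of $\Phi(d)$: the closed form mixes a term linear in $d$ with terms involving $\rho_q^d$ and $(1-\rho_p\rho_q^{d-1})^{-1}$, so the second difference must be shown to be nonnegative through a careful calculation. A secondary obstacle is the $o(N)$ bound on the remainder $R(\omega)$, which requires controlling the accumulated per-block transients $e_l:=S_\omega(m_{l-1})-S^*(d_l)$---satisfying $e_{l+1}=a(d_l)\,e_l+\bigl(S^*(d_l)-S^*(d_{l+1})\bigr)$ with contraction rate bounded by $\rho_p$---and is most delicate in the regime where $k$ is comparable to $N$, where the blocks are short and the transients do not individually decay within a single block.
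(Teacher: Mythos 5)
Your reduction to the block decomposition is fine up to the point where you replace the true starting value of each block by the fixed point: the claimed uniform bound $\left|R(\omega)\right|=o(N)$ is false, and this is fatal to the proposed chain of inequalities, which needs the one-sided bound $R(\omega)\ge-o(N)$ to pass from $\Sigma(\omega)$ to $\sum_{l}\Phi(d_{l})$. Concretely, take gaps alternating between $1$ and a fixed large $D$, so $k\asymp N$. In the periodic steady state the value of $S_\omega$ at the end of a long block is $u\approx\frac{\beta}{1-\alpha}$ and at the end of a short block is $v=\beta(1+u)$, while the fixed points are $S^{*}(1)=\frac{\beta}{1-\beta}$ and $S^{*}(D)\approx\frac{\beta}{1-\alpha}$. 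Hence each short block contributes $B(1)\,(u-S^{*}(1))=\beta\cdot\frac{\beta(\alpha-\beta)}{(1-\alpha)(1-\beta)}>0$ to $R$, but each long block contributes $B(D)\,(v-S^{*}(D))\approx\frac{\alpha}{1-\alpha}\cdot\frac{\beta(\beta-\alpha)}{1-\alpha}<0$, and since $\frac{\beta}{1-\beta}<\frac{\alpha}{1-\alpha}$ the net contribution per period is a strictly negative constant. With $\Theta(N)$ periods this gives $R(\omega)\le-cN$: the transients never equilibrate when consecutive gaps keep oscillating, exactly in the regime $k\asymp N$ that carries the content of the theorem (for fixed $k$ the statement is easy, since every term of $S_N$ is bounded by $\alpha/(1-\alpha)$). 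So $\sum_{l}\Phi(d_{l})$ can overestimate $\Sigma(\omega)$ by $\Theta(N)$, and Jensen applied to $\Phi$ alone cannot produce the required lower bound; you would need a per-block cost that tracks the dependence on neighboring gaps, which is a genuinely different (and harder) argument.

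Two further points are left open even granting the above: the convexity $\Phi(d+1)+\Phi(d-1)\ge2\Phi(d)$ is only asserted (you call it the main obstacle, and it is where the paper's analogue of the work actually happens), and the Jensen benchmark $(k+1)\Phi\!\left(N/(k+1)\right)$ does not match $\omega_{N,k}$, which has $k$ gaps of size about $N/k$ plus a trailing block with no terminal $p$-drift, so the step ``$\omega_{N,k}$ attains the bound up to $O(1)$'' needs its own estimate (e.g.\ asymptotic linearity of $\Phi$). For comparison, the paper avoids transients entirely: it passes to a circular sum $\widetilde{S}_{N}$ differing from $S_{N}$ by $O(1)$, writes $\widetilde{S}_{N}=\sum_{d}\alpha^{d}\sum_{i}(\beta/\alpha)^{n_{i}^{(d)}}$ in terms of the number of $p$-drifts in each window of length $d$, shows by an exchange argument that each such sum is minimized when the window counts are almost constant, and checks that equal spacing achieves this simultaneously for every $d$; no remainder control or convexity in the gap variable is needed. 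If you want to salvage your route, the interaction term $R(\omega)$ must be absorbed into the per-block costs (for instance by working with costs depending on pairs of consecutive gaps), not discarded as $o(N)$.
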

Finally, we consider the set of environments $\omega_{ak,k}$ for
some $a\in\mathbb{N}$, and calculate $\lim\limits _{k\rightarrow\infty}\frac{E_{\omega_{ak,k}}^{0}\left(T_{N}\right)}{ak}$
:
\begin{prop}
\label{prop:calculation}Let $a\in\mathbb{N}$. Then:

\[
\lim_{k\rightarrow\infty}\frac{E_{\omega_{ak,k}}^{0}\left(T_{ak}\right)}{ak}=1+\frac{2}{a}\cdot\frac{\alpha^{a+2}-a\alpha^{3}+\left(a-1\right)\alpha^{2}+\left(\left(a\alpha^{2}-\left(a+1\right)\alpha\right)\alpha^{a}+\alpha\right)\beta}{\left(\alpha^{2}-2\alpha+1\right)\alpha^{a}\beta-\alpha^{3}+2\alpha^{2}-\alpha}.
\]

\end{prop}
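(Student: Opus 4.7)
The plan is to exploit the exact $a$-periodicity of the environment $\omega_{ak,k}$, reduce the limit to a fixed-point computation for an affine one-step recurrence, and then simplify algebraically.

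First I would apply Proposition~\propref{hitting_time} to write
\[
\frac{E_{\omega_{ak,k}}^{0}(T_{ak})}{ak} = 1 + \frac{2}{ak}\sum_{i=1}^{ak-1} T_i, \qquad \text{where } T_i := \sum_{j=1}^{i}\prod_{\ell=j}^{i}\rho_\ell,
\]
and observe the telescoping recurrence $T_i = \rho_i(1 + T_{i-1})$ with $T_0 = 0$. For $a \in \mathbb{N}$ the floor in the definition of $\omega_{ak,k}$ places the $p$-drifts exactly at positions $a-1, 2a-1, \ldots, ka-1$, i.e.\ at the single residue $a-1 \pmod{a}$. Writing $\alpha = (1-q)/q$ and $\beta = (1-p)/p$, the sequence $(\rho_i)_{i=1}^{ak-1}$ is thus exactly $a$-periodic: $\rho_i = \beta$ if $i \equiv a-1 \pmod a$ and $\rho_i = \alpha$ otherwise.

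Composing the recurrence over one full period yields an affine map $x \mapsto c + \gamma x$ with contraction factor $\gamma := \alpha^{a-1}\beta \in (0,1)$ and an explicit intercept $c$ depending on $\alpha, \beta, a$. Hence $T_{ma + r}$ converges geometrically as $m \to \infty$ to a limit $\tau_r$ for each $r \in \{0,1,\ldots,a-1\}$, and the $\tau_r$'s are characterized by the cyclic system $\tau_r = \alpha(1 + \tau_{r-1})$ for $r \ne a-1$ and $\tau_{a-1} = \beta(1 + \tau_{a-2})$, with indices taken mod $a$. Iterating from $\tau_0$ produces $\tau_r = \alpha(1-\alpha^r)/(1-\alpha) + \alpha^r \tau_0$ for $1 \le r \le a-2$, then $\tau_{a-1}$ as an affine function of $\tau_0$, and the closing equation $\tau_0 = \alpha(1+\tau_{a-1})$ pins down $\tau_0 = c/(1-\gamma)$.

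Since $|T_{ma+r} - \tau_r| = O(\gamma^m)$ geometrically, the Ces\`aro average $\frac{1}{ak}\sum_{i=1}^{ak-1}T_i$ converges to $\frac{1}{a}\sum_{r=0}^{a-1}\tau_r$ as $k \to \infty$, with boundary contributions of order $1/k$. Combining with the opening display yields
\[
\lim_{k\to\infty}\frac{E_{\omega_{ak,k}}^{0}(T_{ak})}{ak} = 1 + \frac{2}{a}\sum_{r=0}^{a-1}\tau_r.
\]
I expect the main obstacle to be the final algebraic simplification: one must sum the geometric series $1 + \alpha + \cdots + \alpha^{a-2}$, substitute the closed form $\tau_0 = c/(1-\gamma)$, and collect common denominators in order to match the specific polynomial form stated in the proposition. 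Since $1-\gamma = 1 - \alpha^{a-1}\beta$, multiplying by $\alpha$ naturally produces the factor $\alpha^a\beta - \alpha$ appearing in the denominator, while the geometric-sum denominator $(1-\alpha)^2$ contributes the remaining $(\alpha-1)^2$ factor; this is routine bookkeeping but the telescoping of the $\alpha^r\tau_0$ terms with the inhomogeneous piece requires care to arrive at the exact rational expression.
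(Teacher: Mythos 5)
Your overall route is sound and genuinely different from the paper's. The paper never touches the recursion for the partial sums: it evaluates the circle quantity $\widetilde{S}^{*}_{ak}$ by classifying the subintervals of the circle according to the number $n\ge 0$ of $p$-drifts they contain, summing the two families of geometric series $s_{0}=\sum_{i=1}^{a-1}(a-i)\alpha^{i}$ and $s_{n}=\beta^{n}\alpha^{(a-1)(n-1)}\bigl(\sum_{r=0}^{a-1}\alpha^{r}\bigr)^{2}$, and then invoking Proposition \ref{prsn:circle_approximation} to pass from $\widetilde{S}^{*}_{ak}$ back to $S^{*}_{ak}$. You instead stay on the line, use $T_{i}=\rho_{i}(1+T_{i-1})$ together with the exact $a$-periodicity of the environment (your identification of the drift positions $ia-1$, i.e.\ residue $a-1 \pmod{a}$, is correct), obtain geometric convergence $T_{ma+r}\to\tau_{r}$ from the affine contraction with factor $\alpha^{a-1}\beta$, and take a Ces\`aro average. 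This is a clean alternative which, as a bonus, dispenses with the circle construction entirely; both arguments terminate in comparable geometric-series bookkeeping.

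The genuine gap is precisely the step you postpone as routine. You never verify that $1+\frac{2}{a}\sum_{r=0}^{a-1}\tau_{r}$ coincides with the displayed rational function, and if you carry the algebra out you will find that, as printed, it does not: solving the cyclic system gives $\tau_{a-1}=\frac{\beta(1-\alpha^{a})}{(1-\alpha)(1-\alpha^{a-1}\beta)}$ and $\sum_{r}\tau_{r}=\sum_{s=1}^{a-2}(a-1-s)\alpha^{s}+(1+\tau_{a-1})\frac{\alpha-\alpha^{a}}{1-\alpha}+\tau_{a-1}$, which is positive, whereas the fraction in the proposition is negative for all admissible parameters, since its denominator factors as $(1-\alpha)^{2}(\alpha^{a}\beta-\alpha)<0$ while its numerator is positive. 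Concretely, for $a=2$, $\alpha=\frac{1}{2}$, $\beta=\frac{1}{4}$ your fixed points are $\tau_{0}=\frac{5}{7}$, $\tau_{1}=\frac{3}{7}$, giving the (correct) limit $\frac{15}{7}$, while the displayed right-hand side evaluates to $1+\frac{2}{2}\cdot\frac{1/8}{-7/64}=-\frac{1}{7}$, impossible for an expected crossing time per site. Expanding your closed form shows it equals the proposition's expression with the fraction negated (equivalently, with denominator $\alpha^{3}-2\alpha^{2}+\alpha-(\alpha^{2}-2\alpha+1)\alpha^{a}\beta$), so your method produces the correct answer and in fact exposes a sign slip in the paper's final simplification. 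But as submitted your proposal has not established the identity it needs: the ``routine bookkeeping'' is exactly where the content (and the sign) lives, so you must write it out explicitly rather than assert it.
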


\section{Proof of the main theorem}
\begin{proof}[Proof of Proposition \propref{hitting_time} ]
Define $v_{x}=E_{\omega}^{x}\left(T_{N}\right)$ for $0\le x\le N$.
By conditioning on the first step:
\begin{enumerate}
\item $v_{N}=0$
\item $v_{0}=v_{1}+1$
\item $v_{x}=p_{x}v_{x+1}+\left(1-p_{x}\right)v_{x-1}+1\qquad1\le x\le N-1.$
\end{enumerate}

To solve these equations, define $a_{x}=v_{x}-v_{x-1}$ (for $1\le x\le N$)
and $b_{x}=v_{x+1}-v_{x-1}$ (for $1\le x\le N-1$). Then:

\begin{eqnarray*}
b_{x} & = & a_{x}+a_{x+1}\\
a_{x} & = & p_{x}b_{x}+1\\
a_{1} & = & -1
\end{eqnarray*}

We get for $a_{x}$ the relation $a_{x+1}=\rho_{x}a_{x}-\rho_{x}-1$,
whose solution is $a_{x}=-2\sum\limits _{j=1}^{x-1}\prod\limits _{k=j}^{x-1}\rho_{k}-1$,
and then:

\begin{eqnarray*}
v_{x} & = & \sum_{i=x+1}^{N}\left(v_{i-1}-v_{i}\right)+v_{N}\\
 & = & \sum_{i=x+1}^{N}\left(-a_{i}\right)+v_{N}\\
 & = & N-x+2\sum_{i=x}^{N-1}\sum_{j=1}^{i}\prod_{k=j}^{i}\rho_{k}
\end{eqnarray*}
\end{proof}
\begin{defn}
To evaluate $E_{\omega}^{0}\left(T_{N}\right)$ we define:

\begin{eqnarray*}
S_{N} & = & \sum_{i=1}^{N-1}\sum_{j=1}^{i}\prod_{k=j}^{i}\rho_{k}=\sum_{d=1}^{N-1}\sum_{j=1}^{N-d}\prod_{k=j}^{j+d-1}\rho_{k}.
\end{eqnarray*}

Next define $\widetilde{\rho}_{k}$ for $k$ in the circle $\mathbb{Z}_{N-1}$,
such that for $1\le k\le N-1$ we will have $\widetilde{\rho}_{k}=\rho_{k}$
(gluing the point $0$ to the point $N-1$), and then look at:

\[
\widetilde{S}_{N}=\sum_{d=1}^{N-1}\sum_{j=1}^{N-1}\prod_{k=j}^{j+d-1}\widetilde{\rho}_{k}.
\]

This way, rather than summing $\prod\limits _{k=i}^{j}\rho_{k}$ over
subintervals $\left[i,j\right]$ of $[1,N-1]$, we sum \foreignlanguage{english}{$\prod\limits _{k=i}^{j}\widetilde{\rho}_{k}$}
over subintervals of the circle $\mathbb{Z}_{N-1}$.\end{defn}
\begin{prop}
\label{prsn:circle_approximation}Define $\alpha=\frac{1-q}{q},\,\beta=\frac{1-p}{p}$.
Since $\beta<\alpha<1$:
\end{prop}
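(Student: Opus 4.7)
The plan is to compare $\widetilde{S}_N$ with $S_N$ term-by-term and bound the extra contribution coming from the ``wrap-around'' intervals on the circle $\mathbb{Z}_{N-1}$. Writing both sums in the common $(d,j)$ indexing, one sees that for each length $d$ the inner sum in $S_N$ ranges over $j=1,\ldots,N-d$ while the one in $\widetilde{S}_N$ ranges over $j=1,\ldots,N-1$; moreover on the overlap $j\leq N-d$ the cyclic product coincides with the linear one (no wrapping occurs). Thus
\[
\widetilde{S}_N - S_N \;=\; \sum_{d=1}^{N-1}\sum_{j=N-d+1}^{N-1}\prod_{k=j}^{j+d-1}\widetilde{\rho}_k,
\]
a sum with exactly $d-1$ ``wrap-around'' terms for each length $d$.

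The key observation is that each factor $\widetilde{\rho}_k$ equals either $\alpha$ or $\beta$, and since $\beta<\alpha<1$ we have $\widetilde{\rho}_k\leq\alpha$ uniformly in $k$ and in the environment. Hence every product of $d$ consecutive factors is at most $\alpha^d$, and the whole difference is controlled by
\[
\widetilde{S}_N - S_N \;\leq\; \sum_{d=1}^{N-1}(d-1)\,\alpha^d \;\leq\; \sum_{d=1}^{\infty} d\,\alpha^d \;=\; \frac{\alpha}{(1-\alpha)^2}.
\]
The bound depends only on $\alpha$, uniformly in $N$ and in the placement of the $p$-drifts; in particular $\widetilde{S}_N - S_N = O(1)$.

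The main obstacle is really bookkeeping: carefully identifying which cyclic subintervals of $\mathbb{Z}_{N-1}$ fail to correspond to subintervals of $[1,N-1]$, and handling the edge cases $d=1$ (which contributes nothing) and $d=N-1$ (where every cyclic subinterval except one wraps). Once the indexing is set up, the geometric estimate is immediate from $\widetilde{\rho}_k\leq\alpha<1$. The resulting approximation $\widetilde{S}_N = S_N + O(1)$ is what makes $\widetilde{S}_N$ a useful proxy: it is invariant under cyclic shifts of the environment and its analysis reduces to understanding gaps between consecutive $p$-drifts, which will be essential when minimizing over arrangements in the main theorem.
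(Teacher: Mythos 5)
Your proposal is correct and follows essentially the same route as the paper: identify the difference $\widetilde{S}_N-S_N$ as the sum over the at most $d$ wrap-around intervals of each length $d$, bound each cyclic product by $\alpha^d$ using $\widetilde{\rho}_k\le\alpha<1$, and conclude with the convergent series $\sum_{d\ge1}d\alpha^d$, uniformly in $N$. Your explicit constant $\frac{\alpha}{(1-\alpha)^2}$ is a harmless sharpening of the paper's unspecified $C(\alpha)$.
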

\begin{eqnarray*}
\left|\widetilde{S}_{N}-S_{N}\right| & = & \sum_{d=1}^{N-1}\sum_{j=N-d+1}^{N-1}\prod_{k=j}^{j+d-1}\rho_{k}\\
 & \le & \sum_{d=1}^{N-1}d\alpha^{d}\\
 & \le & \sum_{d=1}^{\infty}d\alpha^{d}<C(\alpha)
\end{eqnarray*}

for some constant $C\left(\alpha\right)$ which doesn't depend on
$N$.
\begin{defn}
Let $n_{i}^{\left(d\right)}$ be the number of $p$-drifts in the
interval $\left[i,i+d-1\right]$.
\end{defn}
Since every drift appears in $d$ intervals of length $d$, $\sum\limits _{i=1}^{N-1}n_{i}^{\left(d\right)}=dk$.
Also,

\begin{eqnarray*}
\widetilde{S}_{N} & = & \sum_{d=1}^{N-1}\sum_{i=1}^{N-1}\left(\frac{\beta}{\alpha}\right)^{n_{i}^{\left(d\right)}}\cdot\alpha^{d}\\
 & = & \sum_{d=1}^{N-1}\sigma_{d}
\end{eqnarray*}

where $\sigma_{d}=\sum\limits _{i=1}^{N-1}\left(\frac{\beta}{\alpha}\right)^{n_{i}^{\left(d\right)}}\cdot\alpha^{d}$.
\begin{claim}
\label{clm:min_sigmad}For $n_{l}^{\left(d\right)}\in\mathbb{N}$
the expression $\sigma_{d}$ is minimized under the restriction $\sum\limits _{l=1}^{N-1}n_{l}^{\left(d\right)}=dk$
if $n_{i}^{\left(d\right)}-n_{j}^{\left(d\right)}\le1$ for all $i,j$.\end{claim}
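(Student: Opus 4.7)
The plan is to prove the claim by a two-point exchange argument driven by the convexity of $n \mapsto (\beta/\alpha)^n$. Writing $r := \beta/\alpha$, the hypothesis $\beta < \alpha < 1$ gives $r \in (0,1)$, so this map is strictly convex on the integers; equivalently, the increments $r^{n+1} - r^n$ are increasing in $n$.

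First I would record the one-move inequality. For any integers $a \ge b + 2$,
\[
(r^{a-1} + r^{b+1}) - (r^a + r^b) = (1-r)\bigl(r^{a-1} - r^b\bigr) < 0,
\]
using $r \in (0,1)$ together with $a - 1 > b$. In other words, starting from a pair $(a,b)$ with $a - b \ge 2$ and moving one unit of mass from the larger entry to the smaller (so the pair becomes $(a-1,\,b+1)$) strictly decreases the contribution $r^a + r^b$, while obviously preserving their sum.

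I would then argue by contradiction. Suppose $(n_1^{(d)}, \ldots, n_{N-1}^{(d)})$ is a minimizer of $\sigma_d = \alpha^d \sum_l r^{n_l^{(d)}}$ over nonnegative integer tuples with $\sum_l n_l^{(d)} = dk$, and suppose there exist indices $i, j$ with $n_i^{(d)} - n_j^{(d)} \ge 2$. Applying the one-move adjustment to $(n_i^{(d)}, n_j^{(d)})$ preserves the total $dk$, keeps all entries nonnegative, and strictly lowers $\sigma_d$ by the inequality above --- contradicting minimality. Existence of a minimizer is immediate from finiteness of the feasible set, so every minimizer must satisfy $|n_i^{(d)} - n_j^{(d)}| \le 1$ for all $i,j$, which is the configuration asserted in the claim.

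I do not anticipate a genuine obstacle: this is a standard lattice-simplex exchange argument for convex objectives. The only point needing care is the sign in the one-move inequality, which rests entirely on $r \in (0,1)$, i.e.\ on the hypothesis $\beta < \alpha$ that the paper has already emphasized.
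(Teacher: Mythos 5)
Your exchange step is the same convexity move the paper uses, but as written your argument proves the converse of the claim and leaves a genuine gap. You show that every minimizer is almost constant (satisfies $n_i^{(d)}-n_j^{(d)}\le 1$ for all $i,j$). The claim, however, asserts the other direction: every almost constant vector \emph{is} a minimizer, and this is the direction needed later, since the equally spaced configuration of Claim \ref{clm:drift_placement} is one particular almost constant vector and one must conclude that it attains the minimum. To pass from ``some minimizer is almost constant'' (which follows from your argument plus existence on a finite set) to ``every almost constant vector is a minimizer'' you need the additional observation, which the paper proves explicitly: writing $a=\min_l n_l$ and letting $m_1$ be the number of entries equal to $a+1$, the constraint forces $\left(N-1\right)a+m_1=dk$ with $0\le m_1<N-1$, which determines $a$ and $m_1$ uniquely; hence all almost constant vectors with the given sum are permutations of one another and give the same $\sigma_d$. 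Without this step your conclusion does not yield the claim as stated or as used.

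A second, smaller issue is the strictness of your one-move inequality. It requires $r=\beta/\alpha>0$, but the paper allows $p=1$, i.e.\ $\beta=0$ (its own proof is careful to assume only $0\le\beta/\alpha<1$). When $r=0$ the exchange inequality is only non-strict whenever both entries involved are at least $1$, your contradiction collapses, and indeed ``every minimizer is almost constant'' is then false: with $r=0$, $N-1=3$, $dk=6$, the vectors $\left(1,2,3\right)$ and $\left(2,2,2\right)$ both give $\sigma_d=0$. The paper's route --- iterate the non-strict exchange to reach an almost constant minimizer, then invoke the permutation-uniqueness of almost constant vectors --- covers this boundary case; your version as stated does not.
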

\begin{proof}
For convenience, we omit $d$ from the notation, and set $\mathbf{n}=\left(n_{1},...,n_{N-1}\right)$.
If a vector $\mathbf{n}$ satisfies $n_{i}-n_{j}\le1\,\forall i,j$,
we say $\mathbf{n}$ is almost constant. We will show that $\sigma$
is minimal for some almost constant vector. Then we show that $\sigma$
takes on the same value for all almost constant vectors under the
restriction, and this completes the proof.

Suppose $\sigma$ is minimized (under the restriction) by some vector
$\mathbf{n}^{0}$. If $\mathbf{n}^{0}$ is almost constant, we are
done. Else, for some $i,j$ we have that $n_{i}^{0}-n_{j}^{0}\ge2$.
We choose $i,j$ such that \foreignlanguage{english}{$n_{i}^{0}-n_{j}^{0}$}
is maximal. Define:

\[
n_{l}^{1}=\begin{cases}
n_{l}^{0} & l\ne i,j\\
n_{l}^{0}-1 & l=i\\
n_{l}^{0}+1 & l=j
\end{cases}.
\]

$\mathbf{n}^{1}$ satisfies the restriction, and $\sigma\left(\mathbf{n}^{0}\right)\ge\sigma\left(\mathbf{n}^{1}\right)$:

\begin{eqnarray*}
\sigma\left(\mathbf{n}^{0}\right)-\sigma\left(\mathbf{n}^{1}\right) & = & \sum\limits _{t=1}^{N-1}\left(\frac{\beta}{\alpha}\right)^{n_{t}^{0}}\cdot\alpha^{d}-\sum\limits _{t=1}^{N-1}\left(\frac{\beta}{\alpha}\right)^{n_{t}^{1}}\cdot\alpha^{d}\\
 & = & \alpha^{d}\left(\left(\frac{\beta}{\alpha}\right)^{n_{i}^{0}}+\left(\frac{\beta}{\alpha}\right)^{n_{j}^{0}}-\left(\frac{\beta}{\alpha}\right)^{n_{i}^{0}-1}-\left(\frac{\beta}{\alpha}\right)^{n_{j}^{0}+1}\right)\\
 & = & \alpha^{d}\left(1-\frac{\beta}{\alpha}\right)\left(\left(\frac{\beta}{\alpha}\right)^{n_{j}^{0}}-\left(\frac{\beta}{\alpha}\right)^{n_{i}^{0}-1}\right)\\
 & \ge & 0,
\end{eqnarray*}

where the inequality follows from the fact that $0\le\frac{\beta}{\alpha}<1$
and $n_{j}^{0}<n_{i}^{0}-1$. From minimality of $\sigma\left(\mathbf{n}^{0}\right)$,
we get that $\sigma\left(\mathbf{n}^{1}\right)$ is also minimal.
This process must end after a finite number of steps $f$, yielding
an almost constant $\mathbf{n}^{f}$ which minimizes $\sigma$.

Now for a general almost constant vector $\mathbf{n}$, set $a=\min\left\{ n_{l}:\,1\le l\le N-1\right\} $.
We have $n_{l}\in\left\{ a,a+1\right\} $, so defining $m_{0}$ to
be the number of $a$'s and $m_{1}$ to be the number of $a+1$'s,
we get:

\begin{eqnarray*}
dk & = & \sum\limits _{l=1}^{N-1}n_{l}\\
 & = & m_{0}a+m_{1}\left(a+1\right)\\
 & = & \left(m_{0}+m_{1}\right)a+m_{1}\\
 & = & \left(N-1\right)a+m_{1},
\end{eqnarray*}

and since $m_{1}<N-1$, there is a unique solution for natural $a,m_{1}$.
So all almost constant $\mathbf{n}$ (satisfying the restriction)
are the same up to ordering, and since $\sigma$ doesn't depend on
the order, they all give the same value.\end{proof}
\begin{claim}
\label{clm:drift_placement}For every choice of $M,k$, the placement
of $k$ drifts on the circle $\mathbb{Z}_{M}$ in which the $i$th
drift is at the point $\left\lfloor i\cdot\frac{M}{k}\right\rfloor $
satisfies:

\[
\forall d,i,j\quad n_{i}^{\left(d\right)}-n_{j}^{\left(d\right)}\le1.
\]
\end{claim}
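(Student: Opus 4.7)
The plan is to prove something slightly stronger: for every $d$ and every $i$, the count $n_i^{(d)}$ equals either $\lfloor dk/M \rfloor$ or $\lceil dk/M \rceil$. Since these two integers differ by at most $1$, the desired inequality follows immediately.

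I would proceed by lifting the problem to $\mathbb{Z}$: extend the drift placement by declaring a drift at $\lfloor iM/k \rfloor$ for \emph{every} $i \in \mathbb{Z}$. Since $\lfloor (i+k)M/k \rfloor = \lfloor iM/k \rfloor + M$, the lifted set is $M$-periodic, and its reduction modulo $M$ recovers the original $k$ circle drifts. Any cyclic interval of length $d \le M$ on $\mathbb{Z}_M$ lifts to a length-$d$ interval $[c+1, c+d] \subset \mathbb{Z}$, and because two line-drifts with the same image on the circle differ by $M$, at most one drift per circle-orbit lies in $[c+1, c+d]$. Hence $n_{c+1}^{(d)}$ equals the cardinality of $\{i \in \mathbb{Z} : \lfloor iM/k \rfloor \in [c+1, c+d]\}$.

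Finishing is then a short calculation. The condition $c+1 \le \lfloor iM/k \rfloor \le c+d$ is equivalent to $(c+1)k/M \le i < (c+d+1)k/M$, so the count equals $\lceil (c+d+1)k/M \rceil - \lceil (c+1)k/M \rceil$. This half-open interval has length exactly $dk/M$, so the number of integers it contains is always either $\lfloor dk/M \rfloor$ or $\lceil dk/M \rceil$, independent of $c$. The only subtle point I anticipate is the lifting identity itself---specifically, ensuring that no circle-drift is double-counted inside the lifted interval---which is handled by the orbit-separation argument above; the degenerate case $k = M$ (every point a drift) is trivial.
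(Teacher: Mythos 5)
Your proposal is correct and follows essentially the same route as the paper: both reduce the count $n_x^{(d)}$ to $\bigl\lceil (x+d)\frac{k}{M}\bigr\rceil-\bigl\lceil x\frac{k}{M}\bigr\rceil$ and observe that this always lies in $\bigl\{\lfloor dk/M\rfloor,\lceil dk/M\rceil\bigr\}$. Your explicit $M$-periodic lift to $\mathbb{Z}$ just makes the wrap-around bookkeeping (which the paper leaves implicit) precise, but the underlying argument is the same.
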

\begin{proof}
Place the $i$th drift at the point $\left\lfloor i\cdot\frac{M}{k}\right\rfloor $.
We calculate the number of drifts in the interval $\left[x,x+d-1\right]$.
The first drift inside this interval is:

\begin{eqnarray*}
\left\lfloor i_{0}\cdot\frac{M}{k}\right\rfloor  & \ge & x\\
i_{0}\cdot\frac{M}{k} & \ge & x\\
i_{0} & \ge & x\cdot\frac{k}{M}\\
i_{0} & = & \left\lceil x\cdot\frac{k}{M}\right\rceil .
\end{eqnarray*}

The last drift inside this interval is:

\begin{eqnarray*}
\left\lfloor i_{1}\cdot\frac{M}{k}\right\rfloor  & \le & x+d-1\\
i_{1}\cdot\frac{M}{k} & < & x+d\\
i_{1} & < & \left(x+d\right)\cdot\frac{k}{M}\\
i_{1} & = & \left\lceil \left(x+d\right)\cdot\frac{k}{M}\right\rceil -1.
\end{eqnarray*}

The number of drifts inside this interval is therefore:

\begin{eqnarray*}
i_{1}-i_{0}+1 & = & \left\lceil \left(x+d\right)\cdot\frac{k}{M}\right\rceil -\left\lceil x\cdot\frac{k}{M}\right\rceil \\
 & \ge & \left(x+d\right)\cdot\frac{k}{M}-x\cdot\frac{k}{M}-1\\
 & = & \frac{dk}{M}-1\\
i_{1}-i_{0}+1 & \le & \left(x+d\right)\cdot\frac{k}{M}+1-x\cdot\frac{k}{M}\\
 & = & \frac{dk}{M}+1.
\end{eqnarray*}

So for non-integer $\frac{dk}{M}$ the number of drifts takes on only
the two values $\left\lfloor \frac{dk}{M}\right\rfloor ,\left\lceil \frac{dk}{M}\right\rceil $.
For integer $\frac{dk}{M}$ we simply have:

\begin{eqnarray*}
i_{1}-i_{0}+1 & = & \left\lceil \left(x+d\right)\cdot\frac{k}{M}\right\rceil -\left\lceil x\cdot\frac{k}{M}\right\rceil \\
 & = & \frac{dk}{M}
\end{eqnarray*}
\end{proof}
\begin{claim}
$\widetilde{S}_{N}$ is minimal for the configuration of drifts described
by $\omega_{N,k}$ (where the $i$th drift is at vertex $\left\lfloor i\cdot\frac{N-1}{k}\right\rfloor $).\end{claim}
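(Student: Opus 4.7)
The plan is to combine Claims \ref{clm:min_sigmad} and \ref{clm:drift_placement}. For any placement of $k$ drifts on the circle $\mathbb{Z}_{N-1}$, we have $\widetilde{S}_N = \sum_{d=1}^{N-1} \sigma_d$, where each $\sigma_d$ depends only on the vector $\mathbf{n}^{(d)} = (n_1^{(d)}, \ldots, n_{N-1}^{(d)})$, whose components are nonnegative integers summing to $dk$. Hence Claim \ref{clm:min_sigmad} provides a uniform per-$d$ lower bound: $\sigma_d \ge \sigma_d^{\ast}$, where $\sigma_d^{\ast}$ denotes the common value taken by $\sigma_d$ on any almost-constant vector with sum $dk$.

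Next, I would invoke Claim \ref{clm:drift_placement} with $M = N-1$: the placement putting the $i$-th drift at $\lfloor i(N-1)/k\rfloor$, which is precisely the placement prescribed by $\omega_{N,k}$, yields an almost-constant $\mathbf{n}^{(d)}$ for \emph{every} $d$ simultaneously. Therefore this single configuration realizes the lower bound $\sigma_d^{\ast}$ for all $d$ at once, and summing over $d$ shows it minimizes $\widetilde{S}_N$ among all placements of $k$ drifts.

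The one subtlety worth flagging is that Claim \ref{clm:min_sigmad} minimizes $\sigma_d$ over all nonnegative integer vectors summing to $dk$, a set that is a priori strictly larger than the set of vectors that actually arise as drift counts of some circular placement. This works in our favor rather than against us: the minimum over the larger set is automatically a lower bound for the minimum over realizable configurations, and we have exhibited a realizable configuration that attains it. Since both claims already contain the real content, no substantive obstacle remains; the theorem reduces to the observation that the equally-spaced configuration is distinguished by its ability to minimize every $\sigma_d$ in parallel.
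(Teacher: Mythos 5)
Your proposal is correct and follows essentially the same route as the paper: write $\widetilde{S}_N=\sum_d \sigma_d$, use Claim \ref{clm:min_sigmad} for the per-$d$ lower bound and Claim \ref{clm:drift_placement} (with $M=N-1$) to see that the equally spaced placement attains it for every $d$ simultaneously. Your remark that minimizing over all nonnegative integer vectors summing to $dk$ only strengthens the bound is a nice point of care that the paper leaves implicit.
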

\begin{proof}
$\widetilde{S}_{N}=\sum\limits _{d=1}^{N-1}\sigma_{d}$, and by claims
\ref{clm:min_sigmad} and \ref{clm:drift_placement} each $\sigma_{d}$
is minimized by this configuration, therefore the sum is also minimized.
\end{proof}

\begin{proof}[Proof of Theorem \ref{thm:kn}]
From Proposition \ref{prsn:circle_approximation}, $0<\widetilde{S}_{N}-S_{N}<C$.
Let $n_{0}=\frac{2C}{\varepsilon}$. Then for $N>n_{0}$:

\begin{eqnarray*}
\frac{E_{\omega}^{0}\left(T_{N}\right)}{N} & = & \frac{N+2S_{N}}{N}\\
 & = & 1+2\frac{S_{N}}{N}\\
 & > & 1+2\frac{\widetilde{S}_{N}}{N}-\varepsilon\\
 & \ge & 1+2\frac{\widetilde{S}_{N}^{*}}{N}-\varepsilon\\
 & \ge & 1+2\frac{S_{N}^{*}}{N}-\varepsilon\\
 & = & \frac{E_{\omega_{N,k}}^{0}\left(T_{N}\right)}{N}-\varepsilon
\end{eqnarray*}

where we denote by $S_{N}^{*}$ and $\widetilde{S}_{N}^{*}$ the values
caculated for $\omega_{N,k}$.
\end{proof}

\begin{proof}[Proof of Proposition \ref{prop:calculation}]
We evaluate $\lim\limits _{k\rightarrow\infty}\frac{\widetilde{S}_{ak}^{*}}{ak}$.
First, we consider the $k$ intervals that do not contain any $\beta$,
each of which contributes:

\[
s_{0}=\sum_{i=1}^{a-1}\left(a-i\right)\alpha^{i}.
\]

Next we consider the $k$ intervals that contain $n\ge1$ $\beta$'s:

\[
s_{n}=\beta^{n}\cdot\alpha^{\left(a-1\right)\left(n-1\right)}\cdot\sum_{r=0}^{a-1}\sum_{s=0}^{a-1}\alpha^{r+s}.
\]

Then we get:

\begin{eqnarray*}
\lim\limits _{k\rightarrow\infty}\frac{\widetilde{S}_{ak}^{*}}{ak} & = & \frac{1}{a}\lim\limits _{k\rightarrow\infty}\frac{ks_{0}+\sum\limits _{n=1}^{k}ks_{n}}{k}\\
 & = & \frac{1}{a}\cdot\frac{\alpha^{a+2}-a\alpha^{3}+\left(a-1\right)\alpha^{2}+\left(\left(a\alpha^{2}-\left(a+1\right)\alpha\right)\alpha^{a}+\alpha\right)\beta}{\left(\alpha^{2}-2\alpha+1\right)\alpha^{a}\beta-\alpha^{3}+2\alpha^{2}-\alpha},
\end{eqnarray*}

and since $\lim\limits _{k\rightarrow\infty}\frac{\widetilde{S}_{ak}^{*}-S_{ak}^{*}}{ak}=0$
from Proposition \ref{prsn:circle_approximation}, the proof is complete.
\end{proof}

\section{Further questions}
\begin{enumerate}
\item Show that the optimal environment also minimizes the variance of the
hitting time.
\item Can this result be extended to a random walk on $\mathbb{Z}$ with
a given density of drifts (as in \cite{procaccia2012need})?
\item Can similar results be found for other graphs? For example, $\mathbb{Z}_{2}\times\mathbb{Z}_{N}$.
\end{enumerate}
\textbf{Acknowledgements:} Thanks to Eviatar Procaccia and Itai Benjamini
for introducing us to this problem and for useful discussions. 

\bibliographystyle{amsplain}
\bibliography{need_for_speed}

\end{document}